\newcommand{\R}{\mathbb{R}}
\newcommand{\N}{\mathbb{N}}
\newcommand{\E}{\mathbb{E}}
\newcommand{\pp}{\mathbb{P}}
\newcommand{\kO}{O}
\newcommand{\lin}{\left[\kern-0.15em\left[}
\newcommand{\rin} {\right]\kern-0.15em\right]}
\newcommand{\linf}{[\kern-0.15em [}
\newcommand{\rinf} {]\kern-0.15em ]}
\newcommand{\ilin}{\left]\kern-0.15em\left]}
\newcommand{\irin} {\right[\kern-0.15em\right[}
\newtheorem{lem}{Lemma}[section]
\newtheorem{theo}[lem]{Theorem}
\newtheorem{cor}[lem]{Corollary}
\newtheorem*{ack}{Acknowledgments}
\title[  Cram\'{e}r moderate deviation for critical Curie-Weiss model]
       {\bf A Cram\'{e}r type  moderate deviation theorem  for the critical Curie-Weiss model}
\author{Van Hao Can}
\author{Viet-Hung Pham}
\address{Institute of Mathematics, Vietnam Academy of Science and Technology, 18 Hoang Quoc Viet Street, 10307 Hanoi, Vietnam}
\email{cvhao89@gmail.com}
 \email{pgviethung@gmail.com}
 \keywords{Cram\'er type moderate deviation, Curie-Weiss model
} 
\subjclass[2010]{60F10, 82B20}
\begin{document}
\maketitle
\begin{abstract}
Limit theorems for the magnetization of Curie-Weiss model have been studied extensively by Ellis and Newman. To refine these results, Chen, Fang and  Shao prove Cram\'{e}r type moderate deviation theorems for non-critical cases by using Stein method. In this paper, we consider the same question for the remaining case - the critical Curie-Weiss model. By direct and simple arguments based on Laplace method, we provide an explicit formula of the error and deduce a Cram\'{e}r type result.  
\end{abstract}

\section{Introduction}
  Let $(X_i)$ be a sequence of  i.i.d. random variables satisfying $\E X_1 =0$, $\textrm{Var}(X_1) =1$. Then the classic Central limit theorem says that the normalized sum $W_n = (X_1 + \ldots + X_n)/ \sqrt{n}$ converges in law to a standard normal random variable  $W$. A natural question is to understand the rate of the convergence of the tail probability  $\pp(W_n >x)$ to $\pp(W>x)$ for the largest possible  range of $x$. There are two major approaches to measure the approximation error. The first approach is to study the absolute error by Berry-Esseen type bounds. The other one is to study the relative error of  the tail probability. One of the first result in this approach is the following Cram\'er type moderate deviation theorem. If   $\E(e^{\alpha |X_1|^{1/2}}) < \infty$, for some $\alpha >0$, then   
\begin{eqnarray*}
\frac{\pp( W_n >x)}{1-\Phi(x)} = 1+ \kO(1)(1+x^3)/\sqrt{n},
\end{eqnarray*}
for $0 \leq x \leq n^{1/6}$, with $\Phi$ the standard normal distribution function. It has been also shown that the assumptions on the exponential moment of $X_1$ and the length of range $n^{1/6}$ are optimal.   We refer the reader to the book \cite{P} for a proof of this result and a more detailed discussion. 

\vspace{0.2 cm}
The Cram\'{e}r type moderate deviation  results  have been proved to be useful in  designing  statistical tests since they give  a relation between the size and the accuracy of  tests, see e.g. \cite{LiSh1,LiSh}. Hence, a lot of attention has been drawn in investigating this  problem  not only for independent variables but also  for  dependent structures as stationary process \cite{COT,WuZh}, self-linear process \cite{PSZW}, normalized sums \cite{CSWX,Shao},  and  L-statistics \cite{Grib}. On the other hand, Cram\'{e}r type moderate deviation theorems for nonnormal limit distribution are also provided, such as for chi-squared distribution \cite{LiSh}, for sub-Gaussian or exponential distribution \cite{CS}. 

\vspace{0.2 cm}
In this paper, we  study the case of   the {\it critical Curie-Weiss model}, where the spin variables are dependent and the limit distribution is nonnormal. Let us first recall some definitions and existing results for Curie-Weiss model.  For $n\in \N$, let $\Omega_n = \{\pm 1\}^n$  be the space of spin configurations. The spin configuration probability is given by Boltzman-Gibbs distribution, i.e. for any $\sigma \in \Omega_n$, 
\begin{eqnarray*}
\mu_n(\sigma)=Z_{n}^{-1} \exp \left( \frac{\beta}{n} \sum \limits_{1\leq i< j \leq n}  \sigma_i \sigma_j + \beta h \sum \limits_{i=1}^n \sigma_i\right),
\end{eqnarray*}
where $Z_n$ is the normalizing factor, $\beta >0$ and $h \in \R$ are inverse temperature and external field respectively. The Curie-Weiss model has been shown to exhibit a phase transition at $\beta_c=1$. More precisely, the  asymptotic behavior of the total spin (also called the magnetization) $S_n = \sigma_1 + \ldots + \sigma_n$ changes when $\beta$ crosses the critical value $1$. Let us consider the following fixed-point equation 
\begin{equation} \label{fpe}
m=\tanh(\beta(m+h)).
\end{equation}

\vspace{0.2 cm}
\noindent {\bf Case 1.} $ 0 <\beta <1$, $h \in \R$ or $\beta \geq 1, h \neq 0$ ({\it the uniqueness regime of magnetization}). The equation \eqref{fpe} has   a unique solution $m_0$, such that $m_0h \geq 0$.  In this case, $S/n$ is concentrated around $m_0$ and has a
Gaussian limit under proper standardization, see \cite{E}. Moreover,  in \cite{CFS} the authors prove  the following moderate deviation theorem for the magnetization by using  Stein method. 
\begin{theo} \label{tm} \cite[Proposition 4.3]{CFS} In case 1, let us define
$$W_n= \frac{S_n - nm_0}{v_n},$$
where
$$v_n = \sqrt{\frac{n(1-m_0^2)}{1-(1-m_0^2)\beta}}.$$
Then we have
\begin{eqnarray*}
\frac{\mu_n(\sigma: W_n >x)}{1-\Phi(x)} = 1+ \kO(1)(1+x^3)/\sqrt{n},
\end{eqnarray*}
for $0 \leq x \leq n^{1/6}$.
\end{theo}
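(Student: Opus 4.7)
The plan is to give a direct proof via Laplace asymptotics, avoiding Stein's method entirely and paralleling the strategy the authors announce for the critical case. The starting point is the explicit law of the magnetization,
\[
\mu_n(S_n = k) = Z_n^{-1} \binom{n}{(n+k)/2} \exp\!\left(\frac{\beta k^2}{2n} + \beta h k\right),
\]
valid on the lattice $k \in \{-n, -n+2, \ldots, n\}$ (with irrelevant constants absorbed into $Z_n$). Stirling's formula gives, uniformly for $m = k/n$ bounded away from $\pm 1$,
\[
\binom{n}{(n+k)/2} = \frac{2^{n+1}}{\sqrt{2\pi n(1-m^2)}}\, e^{-nI(m)}\bigl(1 + \kO(1/n)\bigr),
\]
where $I(m) = \tfrac{1}{2}[(1+m)\log(1+m) + (1-m)\log(1-m)]$ is the Rademacher rate function. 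This reduces the problem to the asymptotics of $e^{-nG(m)}$ with $G(m) = I(m) - \beta m^2/2 - \beta h m$.

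First I would verify that $G'(m) = \tanh^{-1}(m) - \beta(m+h)$, so the critical points of $G$ are precisely the solutions of \eqref{fpe}; in the uniqueness regime $m_0$ is the unique global minimum, and $G''(m_0) = (1-m_0^2)^{-1} - \beta > 0$ equals exactly $n/v_n^2$. A Taylor expansion of $G$ around $m_0$ up to order four, combined with the change of variable $t = (k - nm_0)/v_n$, rewrites the Boltzmann weight as $e^{-t^2/2}\, e^{-c_3 t^3/\sqrt{n} + \kO(t^4/n)}$ with $c_3 = G'''(m_0)/(6\, G''(m_0)^{3/2})$, uniformly for $|t| \leq \delta\sqrt{n}$.

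Next I would cut off the tail $|m - m_0| > \delta$ using the strict positivity of $G - G(m_0)$ outside a neighborhood of $m_0$; this contributes an $e^{-cn}$ piece absorbed in the error. Inside the bulk, both the numerator $\mu_n(S_n > nm_0 + xv_n)$ and the normalization $Z_n$ are Riemann sums of spacing $2/v_n$, approximated by integrals via Euler--Maclaurin with error $\kO(1/\sqrt{n})$. Together with the linearization $e^{-c_3 t^3/\sqrt{n}} = 1 + \kO(t^3/\sqrt{n})$ inside the bulk, this yields
\[
\mu_n(W_n > x) = \frac{\int_x^\infty e^{-t^2/2}\bigl(1 + \kO((1+t^3)/\sqrt{n})\bigr)\, dt}{\int_{-\infty}^\infty e^{-t^2/2}\bigl(1 + \kO((1+t^3)/\sqrt{n})\bigr)\, dt} + \kO(1/\sqrt{n}).
\]
A Mills-ratio computation $\int_x^\infty t^3 e^{-t^2/2}\, dt = (x^2+2)e^{-x^2/2} \leq C(1+x^3)\sqrt{2\pi}\,(1-\Phi(x))$ then turns the numerator into $\sqrt{2\pi}\,(1-\Phi(x))\bigl(1 + \kO((1+x^3)/\sqrt{n})\bigr)$, while the denominator is $\sqrt{2\pi}\,(1 + \kO(1/\sqrt{n}))$; dividing by $1-\Phi(x)$ gives the claim.

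The main obstacle is the uniformity over the whole range $0 \leq x \leq n^{1/6}$. At the upper end $x \sim n^{1/6}$, the cubic Taylor term $c_3 t^3/\sqrt{n}$ contributes $\kO(1)$ to the \emph{exponent} of the integrand rather than to the relative error, so it must be kept inside the exponential and linearized only after multiplication by the Gaussian weight; only then does Mills' ratio $1-\Phi(x) \asymp e^{-x^2/2}/(1+x)$ convert the resulting $(1+x^3)e^{-x^2/2}$ correction into the advertised relative error $(1+x^3)/\sqrt{n}$. The threshold $n^{1/6}$ is tight for exactly this reason: beyond it $x^3/\sqrt{n} \gg 1$, the linearization of the cubic exponential fails, and quartic corrections enter at the same order.
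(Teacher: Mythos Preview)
This theorem is not proved in the paper: it is quoted verbatim from \cite[Proposition~4.3]{CFS}, where it is obtained by Stein's method, and appears only as motivation for the critical-case result Theorem~\ref{tcp}. So there is no ``paper's own proof'' to compare against. Your Laplace approach is, however, exactly the strategy the paper employs for Theorem~\ref{tcp}, so it is worth assessing on its merits.

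The outline is sound, but one step as written would wreck the conclusion. In your displayed identity
\[
\mu_n(W_n>x)=\frac{\int_x^\infty e^{-t^2/2}\bigl(1+\kO((1+t^3)/\sqrt{n})\bigr)\,dt}{\int_{-\infty}^\infty e^{-t^2/2}\bigl(1+\kO((1+t^3)/\sqrt{n})\bigr)\,dt}+\kO(1/\sqrt{n}),
\]
the trailing additive $\kO(1/\sqrt{n})$ is fatal: dividing by $1-\Phi(x)\asymp e^{-x^2/2}/(1+x)$ turns it into a relative error of order $(1+x)e^{x^2/2}/\sqrt{n}$, which at $x=n^{1/6}$ is of size $e^{\Theta(n^{1/3})}$. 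The Riemann-sum (Euler--Maclaurin) error for the \emph{numerator} must be controlled in a scale-adapted way: since the summand is decreasing on $[x,\infty)$ once $x$ is moderately large, the discrepancy between sum and integral is at most the first term, hence $\kO(e^{-x^2/2})$ before normalization and $\kO(e^{-x^2/2}/\sqrt{n})$ after --- a relative error $\kO((1+x)/\sqrt{n})$, which is harmless. This is precisely why the paper, in its own Laplace computation for the critical case, splits into $x>10$ and $x\le 10$ and invokes the monotone bound of Lemma~\ref{loia}(i) for the former; you would need the same device here. Your final paragraph correctly flags the analogous subtlety for the cubic term $c_3 t^3/\sqrt{n}$, but the discretization error deserves the same care and you have not given it.
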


\vspace{0.2 cm}
\noindent
{\bf Case 2.} $\beta >1, h = 0$ ({\it the low temperature regime without external field}). The equation \eqref{fpe} has  two nonzero solutions $m_1 < 0 < m_2$,
where $m_1 = - m_2$. In this case, one has the conditional central limit theorems as follows: conditionally on $S_n < 0$ (resp. $S_n > 0$), $S/n$ is concentrated around $m_1$ (resp. $m_2$) and  has a Gaussian limit after  proper scaling, see \cite{E}.  Similarly to case 1, a moderate deviation  result has been also proved.
\begin{theo} \label{th} \cite[Proposition 4.4]{CFS} In case 2, let us define
\begin{eqnarray*}
W_{1,n}=\frac{S_n -n m_1}{v_{1,n}} \hspace{1 cm} \textrm{and} \hspace{1cm} W_{2,n}=\frac{S_n -n m_2}{v_{2,n}},
\end{eqnarray*}
where
$$v_{1,n} = \sqrt{\frac{n(1-m_1^2)}{1-(1-m_1^2)\beta}} \hspace{1 cm} \textrm{and} \hspace{1cm} v_{2,n} = \sqrt{\frac{n(1-m_2^2)}{1-(1-m_2^2)\beta}}.$$
Then we have
\begin{eqnarray*}
\frac{\mu_{n}(\sigma: W_{1,n}>x \mid S_n<0)}{1- \Phi(x)} = 1 + \kO(1)(1+x^3)/\sqrt{n},
\end{eqnarray*}
and 
\begin{eqnarray*}
\frac{\mu_{n}(\sigma:W_{2,n}>x \mid S_n>0)}{1- \Phi(x)} = 1 + \kO(1)(1+x^3)/\sqrt{n},
\end{eqnarray*}
for $0 \leq x \leq n^{1/6}$.
\end{theo}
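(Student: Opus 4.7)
My plan is to exploit the $\sigma \mapsto -\sigma$ symmetry of $\mu_n$ available when $h = 0$. This involution swaps $\{S_n > 0\}$ with $\{S_n < 0\}$ and sends $W_{2,n}$ to $W_{1,n}$, so the two assertions of the theorem are equivalent and it suffices to treat the second one: the conditional distribution of $W_{2,n}$ given $S_n > 0$.

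Next, I would show that the conditional measure $\mu_n(\,\cdot \mid S_n > 0)$ behaves, up to exponentially small corrections, like a Curie-Weiss-type measure concentrated around the positive minimum $m_2$. The cleanest route is a Hubbard-Stratonovich transform: rewrite $\exp(\beta S_n^2/(2n))$ as a Gaussian integral over an auxiliary variable $Y$, so that conditionally on $Y$ the spins $(\sigma_i)$ become i.i.d.\ with effective external field proportional to $Y$. A saddle-point analysis then localizes $Y/\sqrt n$ at the two minima $\pm m_2$ of the Curie-Weiss rate function, and the event $\{S_n > 0\}$ selects the saddle near $+m_2$ with probability tending to $1/2$.

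With this reduction in hand, the conditional law of $W_{2,n}$ matches, to leading order, the standardized sum of $n$ independent spins in an effective uniqueness-regime problem, so the argument can proceed in parallel with that of Theorem \ref{tm}. I would construct an exchangeable pair $(W_{2,n}, W_{2,n}')$ by uniformly resampling one coordinate, verify the approximate linear regression $\E[W_{2,n}' - W_{2,n} \mid \sigma] \approx -\lambda W_{2,n}$ with matching variance proxy $v_{2,n}^2$, and then invoke the Cram\'er-type Stein bound developed in \cite{CFS} to obtain the $(1+x^3)/\sqrt n$ relative error for $0 \le x \le n^{1/6}$.

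The principal obstacle I anticipate is controlling the contribution of configurations with $|S_n|$ small, where the conditioning $\{S_n > 0\}$ is most sensitive and the saddle approximation degrades. One needs a sharp moderate-deviation estimate on $\mu_n(0 < S_n < \varepsilon n)$ showing that this piece is negligible compared with $(1-\Phi(x))/\sqrt n$ uniformly for $0 \le x \le n^{1/6}$; because the target tail is only polynomially small, this boundary effect must be absorbed into the $\kO(1)$ constant in a careful uniform manner.
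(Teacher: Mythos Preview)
This theorem is not proved in the present paper at all: it is quoted as \cite[Proposition 4.4]{CFS} and appears only as background motivating the paper's own contribution, which is Theorem~\ref{tcp} on the critical case $\beta=1$, $h=0$. There is therefore no proof here to compare your proposal against.

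For what it is worth, your outline is broadly consistent with the Stein-method framework of \cite{CFS} that the paper explicitly credits: the $\sigma\mapsto -\sigma$ symmetry reduction is valid since $h=0$ and $m_1=-m_2$, and the exchangeable-pair construction with a single-spin resample is the standard device in that reference. The Hubbard--Stratonovich step you insert is an alternative way to reach the effective uniqueness-regime picture, though it is not strictly needed if one verifies the Stein-pair regression and remainder conditions directly on the conditioned Gibbs measure. The boundary issue you flag---controlling $\mu_n(0<S_n<\varepsilon n)$ uniformly against the target tail $(1-\Phi(x))/\sqrt{n}$---is genuine and requires an exponential concentration input for $S_n/n$ near $m_2$ under the conditioning; any complete proof must address it, but again, that argument lives in \cite{CFS}, not in this paper.
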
 

\vspace{0.2 cm}
\noindent
{\bf Case 3.} $\beta =1$ and $h=0$ ({\it the critical case}). The equation \eqref{fpe} has a unique solution $0$ and $S/n$ is concentrated around $0$. In this case, $S_n/ n^{3/4}$  converges to a nonnormal  distribution with density proportional to $e^{-x^4/12}$, see \cite{E, EN}. Moreover,   the authors of \cite{CS, EL}  give  Berry-Esseen type  bounds for this convergence. 

\begin{theo} \label{tcs} \cite[Theorem 2.1] {CS}    In case 3, let us  define 
$$W_n=\frac{S_n}{n^{3/4}}.$$
Then there exists a positive constant $C$, such that for all $x$
\begin{equation}
\limsup \limits_{n\rightarrow \infty} \sqrt{n} \, \Big| \, \mu_n \left(\sigma : W_n \leq x \right) - F(x)\, \Big | \, \leq C,
\end{equation}
where  
$$F(x) = \frac{\int_{-\infty}^{x}e^{-t^4/12} dt}{\int_{-\infty}^{\infty}e^{-t^4/12}dt}.$$
\end{theo}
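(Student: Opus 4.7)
The plan is to exploit the explicit form of the Boltzmann--Gibbs measure at $\beta=1$, $h=0$. Using the identity
$$\frac{1}{n}\sum_{1\le i<j\le n}\sigma_i\sigma_j = \frac{S_n(\sigma)^2}{2n} - \frac{1}{2},$$
the constant $-1/2$ is absorbed in the partition function, and summing over all configurations with a fixed magnetization $s\in\{-n,-n+2,\ldots,n\}$ yields
$$\mu_n(S_n = s) = \frac{1}{Z_n'}\binom{n}{(n+s)/2}\exp\!\left(\frac{s^2}{2n}\right),\qquad Z_n' = \sum_{s'}\binom{n}{(n+s')/2}\exp\!\left(\frac{(s')^2}{2n}\right).$$
The question is thus reduced to a careful asymptotic analysis of this explicit sum.

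The second step is to apply Stirling's formula, together with the Taylor expansion of the binary entropy
$$H\!\left(\tfrac{1+u}{2}\right) = \log 2 - \frac{u^2}{2} - \frac{u^4}{12} - \frac{u^6}{30} - \cdots.$$
Setting $u = s/n$ and $x = s/n^{3/4}$, the quadratic term $-s^2/(2n)$ exactly cancels the Gibbs factor $e^{s^2/(2n)}$, the quartic term contributes $-x^4/12$, and the sextic term contributes $-x^6/(30\sqrt{n})$. Combined with the $(\pi n(1-u^2)/2)^{-1/2}$ prefactor from Stirling, this produces a pointwise estimate of the form
$$\mu_n(S_n = s) = \frac{2}{n^{3/4}}\cdot\frac{e^{-x^4/12}}{\int_{-\infty}^{\infty}e^{-t^4/12}\,dt}\left(1 + \frac{R(x)}{\sqrt{n}} + \kO(n^{-1})\right),$$
valid for $|x| \le n^{1/4-\delta}$, where $R(x)$ is an explicit polynomial arising from the $u^2$ term of $(1-u^2)^{-1/2}$, the sextic correction inside the exponent, and the normalising centring coming from the analogous expansion of $Z_n'$.

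The third step is to pass from pointwise to distributional estimates by recognising $\mu_n(W_n \le x)$ as a Riemann sum with mesh $2/n^{3/4}$ approximating $F(x)$. Summing the pointwise expansion along this lattice, the odd contribution of $R(x)/\sqrt n$ integrates to $\kO(1/\sqrt n)$, giving $|\mu_n(W_n \le x) - F(x)| = \kO(1/\sqrt{n})$ uniformly for $|x| \le n^{1/4-\delta}$. For $|x|>n^{1/4-\delta}$, a crude bound of Stirling type on $\binom{n}{(n+s)/2}e^{s^2/(2n)}$ shows that both $\mu_n(W_n > x)$ and $1-F(x)$ are super-polynomially small, so the estimate extends to all $x\in\R$.

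The principal difficulty is maintaining uniform control of the remainder across the entire window $|x| \le n^{1/4-\delta}$: the entropy expansion must be truncated at a sufficiently high order that the next omitted term remains $\kO(1/\sqrt n)$ up to the edge of the window, and the $(1-u^2)^{-1/2}$ prefactor, the sextic correction $x^6/\sqrt{n}$ inside the exponential, and the discretisation error of the Riemann sum all contribute terms of order $1/\sqrt{n}$ that must be combined coherently. In particular, the normalising constant $Z_n'$ must be expanded with matching precision so that the relative errors from numerator and denominator cancel correctly.
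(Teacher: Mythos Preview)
Your proposal is correct and follows essentially the same route as the paper: the paper's function $J(t)=I(t)+(2t-1)^2/2$ is exactly your combination of the binary entropy with the Gibbs weight, the Taylor expansion of $J$ about $1/2$ in Lemma~\ref{ljx} is your entropy expansion, the Riemann-sum passage is Lemma~\ref{loia}, and the tail truncation is Lemma~\ref{eaa}. The paper in fact pushes this same computation one order further to obtain the sharper Theorem~\ref{tcp} (with an explicit $G(x)/\sqrt{n}$ correction), from which the cited Theorem~\ref{tcs} follows via Corollary~\ref{cor}.
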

We remark that  in \cite{EL}, the authors generalize Theorem \ref{tcs}  to a near critical regime of inverse temperature $\beta = 1 + O(\tfrac{1}{\sqrt{n}})$. They  also consider  a general class of Curie-Weiss model, where the distribution of a single spin is a generic probability measure instead of Bernoulli distribution as in the classical  model.    

\vspace{0.2 cm}
In this paper, we will prove a Cram\'er type moderate deviation theorem for the total spin  in the critical case.  Our main result is as follows.
\begin{theo} \label{tcp} For the critical case, when $\beta =1$ and $h=0$, let us  define 
$$W_n=\frac{S_n}{n^{3/4}}.$$
Then there exists a positive constant $C$, such that for all $n$ large enough and $0 \leq x \leq n^{1/12}$,
\begin{eqnarray}
\Big |\frac{\mu_n(\sigma: W_n >x)}{1-F(x)} - 1 - \frac{G(x)}{\sqrt{n}} \Big | \leq \frac{C(x^{12}+n^{1/3})}{n},
\end{eqnarray}
where 
$$F(x)= \frac{\int_{\infty}^x p_1(t)dt}{\int_{-\infty}^{\infty} p_1(t)dt},$$
and
\begin{eqnarray*}
G(x) &=&  \left( \frac{\int^{\infty}_x p_2(t)dt}{\int_{x}^{\infty} p_1(t)dt} - \frac{\int^{\infty}_{-\infty} p_2(t)dt}{\int_{-\infty}^{\infty} p_1(t)dt}  \right)
\end{eqnarray*}
with 
\begin{eqnarray*}
p_1(t) = e^{-\frac{t^4}{12}} \hspace{1cm} \textrm{and} \hspace{1cm} p_2(t) = \left(\frac{t^2}{2} - \frac{t^6}{30} \right) e^{-\frac{t^4}{12}}.
\end{eqnarray*}
\end{theo}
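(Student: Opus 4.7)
The plan is to express $\mu_n(W_n>x)$ as a ratio of weighted binomial sums, apply Stirling's formula with one correction term beyond leading order, and extract the claimed expansion from a careful ratio of Riemann sums. At $\beta=1$, $h=0$ the Hamiltonian depends only on $S_n$, so every configuration with $S_n=k$ has equal weight and
\[
\mu_n(W_n>x)=\frac{A_n(x)}{A_n(-\infty)},\qquad A_n(y):=\sum_{\substack{k>yn^{3/4}\\k\equiv n\,(\bmod 2)}}\binom{n}{(n+k)/2}e^{k^2/(2n)}.
\]
Setting $t=k/n^{3/4}$ and combining Stirling's formula applied to $n!$, $((n+k)/2)!$, $((n-k)/2)!$ with the entropy expansion obtained by integrating $\log\tfrac{1+2y}{1-2y}$ for $y=k/(2n)$, one obtains the uniform identity
\[
\log\!\left[\binom{n}{(n+k)/2}e^{k^2/(2n)}\right]=n\log 2-\tfrac12\log\tfrac{\pi n}{2}-\frac{t^4}{12}+\frac{1}{\sqrt n}\!\left(\frac{t^2}{2}-\frac{t^6}{30}\right)+\frac{r(t)}{n}+R_n(t),
\]
valid for $|t|\le n^{1/12}$, where $r(t)$ is an explicit polynomial coming from the next Stirling correction $\tfrac{1}{12n}-\tfrac{1}{12m}-\tfrac{1}{12(n-m)}$ and the next entropy coefficient $k^6/(30n^5)$, and $|R_n(t)|\le C(1+t^{12})/n^{3/2}$. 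Exponentiating, the $1/\sqrt n$ coefficient of the summand is exactly $p_2(t)/p_1(t)$.

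Since consecutive admissible $k$'s are spaced by $2$, a trapezoidal/Euler--Maclaurin estimate together with the super-exponential bound $\int_{|t|>n^{1/12}}p_1(t)\,dt\le Ce^{-n^{1/3}/12}$ yields
\[
A_n(y)=\frac{n^{1/4}\,2^n}{\sqrt{2\pi}}\!\left[\,I_1(y)+\frac{I_2(y)}{\sqrt n}+\frac{J(y)}{n}\,\right]+\mathcal E_n(y),
\]
where $I_j(y):=\int_y^\infty p_j(t)\,dt$, $J(y):=\int_y^\infty r(t)p_1(t)\,dt$, and $\mathcal E_n$ absorbs both the trapezoidal error and the tails beyond $n^{1/12}$. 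A geometric expansion of the ratio $A_n(x)/A_n(-\infty)$ in powers of $1/\sqrt n$ then gives
\[
\frac{A_n(x)}{A_n(-\infty)}=(1-F(x))\!\left[\,1+\frac{G(x)}{\sqrt n}+\frac{H(x)}{n}+\mathcal O(n^{-3/2})\,\right],
\]
where $G$ is precisely the function in the theorem and $H(x)$ is an explicit combination of $I_1$, $I_2$, $J$ and their values at $-\infty$.

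To finish, Laplace's method at the endpoint $t=x$ gives $I_j(x)/I_1(x)=\mathcal O(x^{4(j-1)})$ as $x$ grows through $[0,n^{1/12}]$, which yields $H(x)=\mathcal O(x^{12})$ and accounts for the $x^{12}/n$ term in the claimed error; the residual $\mathcal E_n(x)/[(1-F(x))A_n(-\infty)]$ is bounded by $\mathcal O(n^{-2/3})$, contributing the $n^{1/3}/n$ term. The main obstacle is the Stirling/entropy expansion: one needs a \emph{relative} (multiplicative) remainder that is uniform in $|t|\le n^{1/12}$, so that after division by the exponentially small $I_1(x)\asymp x^{-3}e^{-x^4/12}$ it still vanishes. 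Once such a sharp uniform expansion is in hand, the remaining steps amount to routine Laplace-type bookkeeping.
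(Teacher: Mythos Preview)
Your outline is the paper's own argument: both routes write $1-F_n(x)$ as a ratio of weighted binomial sums, extract the summand $e^{-t^4/12}\bigl(1+n^{-1/2}(\tfrac{t^2}{2}-\tfrac{t^6}{30})+O(n^{-1})\bigr)$ from Stirling plus the Taylor expansion of the entropy function at $\tfrac12$ (the paper packages this as the function $J(t)=I(t)+\tfrac{(2t-1)^2}{2}$ and its derivatives in Lemma~2.2), replace sums by integrals, and expand the quotient.

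One point deserves more care than you give it. The relative-remainder issue you flag for the Stirling/entropy step arises \emph{equally} in the sum-to-integral conversion: a crude trapezoidal bound of size $O(n^{1/12})$ (from a global derivative bound on $p_1,p_2$) is worthless once divided by $n^{3/4}I_1(x)$ when $x$ is near $n^{1/12}$, because $I_1(x)$ is exponentially small. The paper handles this by a case split at $x=10$: for $x\le 10$ the crude $O(n^{1/12})$ bound suffices since $I_1(x)=\Theta(1)$; for $x>10$ the functions $p_1,p_2,r$ are monotone on $(9,\infty)$, so the Riemann-sum error is controlled by the \emph{boundary value} $p_1(x)$ (Lemma~2.1(i)), and $p_1(x)/I_1(x)=\Theta(x^3)$ is harmless. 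Your Euler--Maclaurin formulation produces the same boundary term and therefore works, but your stated bound $\mathcal E_n(x)/[(1-F(x))A_n(-\infty)]=O(n^{-2/3})$ is only valid for bounded $x$; for large $x$ the correct bound is $O(x^3 n^{-3/4})$, which still fits inside $C(x^{12}+n^{1/3})/n$. Two smaller slips: $I_2(x)/I_1(x)\asymp x^6$, not $x^4$; and the order-$1/n$ coefficient picks up a $t^{12}$ term from squaring the $t^6/30$ contribution upon exponentiation (not just from the next Stirling and entropy corrections), which is precisely why the final error carries $x^{12}$.
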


  It is worth noting that Theorem \ref{tcp} gives the exact formula of the error term of order $n^{-1/2}$, while  moderate deviation results  in Theorems \ref{tm} and \ref{th} only show asymptotic estimates of the error terms. The range of  estimate $n^{1/6}$ is replaced by $n^{1/12  }$ due to the change of scaling and limit distribution. The  proof of Theorem \ref{tcp} is simple and direct, based on Laplace method-like arguments.       We have a direct corollary.
\begin{cor} \label{cor} For $0 \leq x \leq n^{1/12}$, we have
\begin{eqnarray*}
\frac{\mu_n(\sigma: W_n >x)}{1-F(x)} = 1+ \kO(1)(1+x^6)/\sqrt{n}.
\end{eqnarray*}
Moreover, for any fixed real number $x$, 
\begin{equation*}
\lim\limits_{n\rightarrow \infty} \sqrt{n} \, \Big( \, \mu_n \left(\sigma : W_n \leq x \right) - F(x)\, \Big) = (F(x)-1)G(x). 
\end{equation*}
\end{cor}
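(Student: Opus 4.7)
The plan is to extract both claims from Theorem~\ref{tcp} in two main moves: first bounding $G(x) = O(1+x^6)$, and second absorbing the error term of Theorem~\ref{tcp} in the two regimes $0 \le x \le n^{1/12}$ (for the Cram\'er-type display) and $x$ fixed (for the asymptotic expansion).

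To bound $G$, I would note that continuity handles bounded $x$, and for $x$ large I would use the Laplace-type asymptotic
\begin{equation*}
\int_x^\infty t^k e^{-t^4/12}\,dt \;=\; 3\, x^{k-3}\, e^{-x^4/12}\bigl(1 + O(x^{-4})\bigr),
\end{equation*}
obtained by repeated integration by parts starting from the identity $\tfrac{d}{dt}e^{-t^4/12} = -\tfrac{t^3}{3}e^{-t^4/12}$. Substituting this into the definitions of $p_1$ and $p_2$ and computing the ratio shows that the leading term of $G(x)$ is $-x^6/30$, so $G(x)=O(1+x^6)$ on $x\ge 0$.

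For the first claim, the routine point is that on $0\le x\le n^{1/12}$ one has $x^{12}/n \le x^6/\sqrt n$ and $n^{-2/3} \le n^{-1/2}$, so the error $C(x^{12}+n^{1/3})/n$ in Theorem~\ref{tcp} is $O((1+x^6)/\sqrt n)$; combined with $|G(x)|/\sqrt n$, which is of the same order, this yields the stated Cram\'er-type bound.

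For the limit statement, fix $x \ge 0$; then the error in Theorem~\ref{tcp} is $O(n^{-2/3})=o(n^{-1/2})$, and rearranging $\mu_n(W_n\le x) = 1-\mu_n(W_n>x)$ gives $\sqrt n\bigl(\mu_n(W_n\le x) - F(x)\bigr) \to (F(x)-1)G(x)$. For $x<0$, I would use the $\sigma\mapsto -\sigma$ symmetry of $\mu_n$ (valid since $h=0$) to write $\mu_n(W_n\le x) = \mu_n(W_n \ge -x)$, reduce to the positive point $-x$ via the identity $F(x)G(-x) = (F(x)-1)G(x)$ (an easy consequence of the evenness of $p_1$ and $p_2$), and use a bound $\mu_n(W_n = -x)=o(n^{-1/2})$ on the single atom. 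This atom bound is the one slightly delicate step; I would get it by applying Theorem~\ref{tcp} at the two adjacent lattice values of $W_n$ straddling $-x$ (separated by $2/n^{3/4}$), so that the atom size is at most $O(n^{-3/4})+O(n^{-2/3}) = o(n^{-1/2})$.
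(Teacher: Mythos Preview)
Your proposal is correct. The paper does not actually give a proof of this corollary; it simply writes ``We have a direct corollary'' after Theorem~\ref{tcp} and states the result. Your argument is precisely the natural way to fill in that gap: the bound $G(x)=O(1+x^6)$ via the tail asymptotic $\int_x^\infty t^k e^{-t^4/12}\,dt \sim 3x^{k-3}e^{-x^4/12}$ is the intended mechanism (and indeed the paper uses essentially this computation in Section~3.5 when it records $\hat P_2(x)/\hat P_1(x)=\Theta(x^6)$), and the absorption of the $C(x^{12}+n^{1/3})/n$ error into $(1+x^6)/\sqrt n$ on $0\le x\le n^{1/12}$ is exactly right.

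One point worth noting is that you do slightly more than the paper asks of the reader: because Theorem~\ref{tcp} is stated only for $x\ge 0$, the second limit for fixed $x<0$ genuinely needs the symmetry step, the identity $F(x)G(-x)=(F(x)-1)G(x)$, and the atom bound $\mu_n(W_n=-x)=o(n^{-1/2})$. Your treatment of these is fine; the atom bound via differencing Theorem~\ref{tcp} at adjacent lattice values (spacing $2/n^{3/4}$, giving $F$- and $G$-differences of order $n^{-3/4}$ plus the $O(n^{-2/3})$ error) is valid. An alternative that avoids the atom entirely is to observe that the paper's proof in Section~3.4.2 (case $x\le 10$) never uses $x\ge 0$, so Theorem~\ref{tcp} in fact holds on the wider range $x\le n^{1/12}$; but working strictly from the stated theorem, your symmetry route is the correct one.
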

The first part of this corollary is a Cram\'{e}r moderate deviation result in classic form, whereas the second part is an improvement of Theorem \ref{tcs}.

\vspace{0.2 cm}
The paper is organized as follows. In Section 2, we provide some preliminary results. In Section 3, we prove the main theorem \ref{tcp}.

\vspace{0.2 cm}
 We fix here some notation. If $f $ and $g$ are two real functions, we write $f= O(g)$ if there exists a constant $C>0,$ such that $f(x) \leq C g(x)$ for all $x$; $f= \Omega(g)$ if $g= \kO(f)$; and  $f =\Theta(g) $ if $f= O(g)$ and $g= O(f)$.  
  \section{Preliminaries}
  \subsection{A lemma on the integral approximations}
  \begin{lem} \label{loia}
   Let $m,q,p$ be positive real numbers. 
  \begin{itemize}
  \item[(i)]   Assume that  $f(t)$ is a decreasing function in $[(m-1)/p, (q+1)/p]$. Then 
  \begin{eqnarray*}
 \Big| \sum_{\substack{ m <\ell < n  \\ 2 \mid \ell  }} f \left( \frac{\ell}{p} \right) - \frac{p}{2}\int_{m/p}^{q/p} f(t) dt \Big| \leq \Big| f \left( \frac{m}{p} \right) \Big|  + \Big| f \left( \frac{q}{p} \right)\Big| , 
  \end{eqnarray*}
  and 
  \begin{eqnarray*}
 \Big| \sum_{\substack{ m <\ell < q  \\ 2 \nmid \ell  }} f \left( \frac{\ell}{p} \right) - \frac{p}{2}\int_{m/p}^{q/p} f(t) dt \Big| \leq  \Big| f \left( \frac{m}{p} \right)\Big|   + \Big| f \left( \frac{q}{p} \right) \Big| . 
  \end{eqnarray*}
  \item[(ii)]  Assume that $f(t)$ is a differentiable function on $\R$ and there exists a positive constant $K$, such that $|f(t)| +|f'(t)| \leq K$.  Then 
   \begin{eqnarray*}
 \Big | \sum_{\substack{ m <\ell < q  \\ 2 \mid \ell  }} f \left( \frac{\ell}{p} \right) - \frac{p}{2}\int_{m/p}^{q/p} f(t) dt \Big | \leq  \frac{K(q-m)}{p} + 2K, 
  \end{eqnarray*}
  and 
  \begin{eqnarray*}
 \Big | \sum_{\substack{ m <\ell < q  \\ 2 \nmid \ell  }} f \left( \frac{\ell}{p} \right) - \frac{p}{2}\int_{m/p}^{q/p} f(t) dt \Big | \leq \frac{K(q-m)}{p} + 2K. 
  \end{eqnarray*}
  \end{itemize}
  \end{lem}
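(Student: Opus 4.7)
Both parts follow the same template: replace each summand $f(\ell/p)$ by an integral of $f$ over a window of length $2/p$ attached to $\ell/p$, sum the local comparisons (using that consecutive same-parity integers are spaced by exactly $2$, so the rescaled windows tile the axis), and correct the small mismatch between the tiled union and the target range $[m/p,q/p]$ at the two endpoints. I will treat the even-$\ell$ case; the odd case is identical after a shift of indices by $1$.

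For part (i), decreasing monotonicity directly supplies the sandwich
$$\frac{p}{2}\int_{\ell/p}^{(\ell+2)/p} f(t)\,dt \;\leq\; f(\ell/p) \;\leq\; \frac{p}{2}\int_{(\ell-2)/p}^{\ell/p} f(t)\,dt.$$
Let $\ell_0,\ell_1$ denote the smallest and largest even integers in $(m,q)$. Summing the upper inequality over even $\ell\in(m,q)$ telescopes into $\frac{p}{2}\int_{(\ell_0-2)/p}^{\ell_1/p}f$, and summing the lower inequality gives $\frac{p}{2}\int_{\ell_0/p}^{(\ell_1+2)/p}f$. Subtracting the target $\frac{p}{2}\int_{m/p}^{q/p}f$ from each leaves only integrals of $f$ over boundary windows of length at most $2/p$ sitting near $m/p$ and near $q/p$, all contained in $[(m-1)/p,(q+1)/p]$ where $f$ is monotone. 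Monotonicity there bounds each such boundary integral by $|f(m/p)|$ or $|f(q/p)|$ according to which endpoint the window lies near, giving the stated estimate.

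For part (ii), the hypothesis $|f'|\leq K$ together with the mean value theorem yields the pointwise comparison
$$\Big|f(\ell/p)-\frac{p}{2}\int_{(\ell-1)/p}^{(\ell+1)/p} f(t)\,dt\Big|\;\leq\;\frac{K}{p}$$
for every integer $\ell$, since $f$ varies by at most $K/p$ on an interval of radius $1/p$. The number of even $\ell\in(m,q)$ is at most $(q-m)/2+1$, so summing these pointwise errors contributes at most $K(q-m)/(2p)+K/p$. Moreover, the tiled union $[(\ell_0-1)/p,(\ell_1+1)/p]$ differs from $[m/p,q/p]$ by intervals of total length at most $4/p$, and using $|f|\leq K$ this boundary mismatch contributes at most $2K$. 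Combining the two sources of error yields the claimed bound $K(q-m)/p+2K$.

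The main obstacle is the endpoint bookkeeping in part (i): one must verify, through a short case analysis on the parities of $\lceil m\rceil$ and $\lceil q\rceil$, that all four correction intervals arising after subtraction actually lie inside the monotonicity domain $[(m-1)/p,(q+1)/p]$ and that the resulting bounds collapse to exactly $|f(m/p)|+|f(q/p)|$ rather than to evaluations at the outer endpoints $(m-1)/p$ or $(q+1)/p$. This is precisely why the monotonicity hypothesis is stated on the enlarged interval. Once this case analysis is completed, everything else is routine integral comparison.
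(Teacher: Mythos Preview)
Your proposal is correct and follows essentially the same route as the paper. The paper omits part (i) entirely as ``simple'' and for part (ii) applies the mean value theorem to obtain $\big|f(\ell/p)-\tfrac{p}{2}\int_{\ell/p}^{(\ell+2)/p}f(t)\,dt\big|\le K/p$ and then sums over $\ell$; the only cosmetic difference is that the paper uses one-sided windows $[\ell/p,(\ell+2)/p]$ while you use centered ones $[(\ell-1)/p,(\ell+1)/p]$, which changes nothing of substance (and in fact gives you a slightly better per-term constant).
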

  \begin{proof}
The proof of  (i) is simple, so we safely leave it to the reader. For (ii),   by using the mean value theorem, we get that for any $\ell$, 
\begin{eqnarray*}
\Big| \, f \left(\frac{\ell}{p} \right) - \frac{p}{2} \int_{\frac{\ell}{p }}^{\frac{\ell+2}{p }} f(t)dt  \, \Big | &\leq &  \frac{pK}{2} \int_{\frac{\ell}{p }}^{\frac{\ell+2}{p }} \left( t- \frac{\ell}{p} \right) dt =\frac{K}{p}.
\end{eqnarray*}
Therefore, by summing over $\ell$ we get desired results. 
  \end{proof}
  \subsection{Estimates on the binomial coefficients} We first recall  a version of Stirling approximation (see \cite{R}) that for all $n\geq 1$, 
\begin{equation*}
\log (\sqrt{2 \pi n}) + n \log n - n + \frac{1}{12n +1} \leq \log (n!) \leq \log (\sqrt{2 \pi n}) + n \log n - n + \frac{1}{12n}.
\end{equation*} 
Using this approximation, we can show that 
\begin{eqnarray}
\binom{n}{k}  \leq   e^{n I(k/n)}, \hspace{0.6 cm} \textrm{for all } k = 0, \ldots, n,  \label{ubnh}
\end{eqnarray}
and
\begin{eqnarray}
\binom{n}{k} = (1-\kO(n^{-1})) \sqrt{\frac{n}{2 \pi k(n-k)}}  \times e^{n I(k/n)}, \hspace{0.6 cm} \textrm{for } |k - (n/2)| < n/4, \label{bnh}
\end{eqnarray}
where $I(0)=I(1)=0$ and for $t \in (0,1)$,
$$I(t)=(t-1) \log (1-t)-t \log t.$$
 We will see in Section 3.1  that the function $J(t)$ defined by 
\begin{equation} \label{doj}
J(t)=I(t)+ \frac{(2t-1)^2}{2}
\end{equation} 
plays an important role in the expression of the distribution  function of $W_n$.  We prove here a lemma to describe  the behavior of $J(t)$. 
\begin{lem} \label{ljx} Let $J(t)$ be the function defined as in \eqref{doj}. Then 
\begin{itemize}
\item[(i)] $J'(1/2)=J''(1/2)=J'''(1/2)=J^{(5)}(1/2)=J^{(7)}(1/2)=0$, and for all $t \neq 1/2$ 
$$J''(t)<0.$$
\item[(ii)] $J^{(4)}(1/2)=-32$, $J^{(6)}(1/2)=-1536$, and for all $1/4 \leq t \leq 3/4$
$$  -2^{25}< J^{(8)}(t) <0. $$ 
\end{itemize}
\end{lem}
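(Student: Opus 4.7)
The plan is to reduce everything to explicit closed forms for the derivatives of $J$. The key observation is that once one computes $I'(t) = \log((1-t)/t)$, the second derivative $I''(t) = -1/(t(1-t))$ admits the partial-fraction decomposition $1/t + 1/(1-t)$, which makes every higher derivative of $J$ a linear combination of powers of $1/t$ and $1/(1-t)$.

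First I would compute $J'(t) = \log((1-t)/t) + 4t - 2$ and $J''(t) = 4 - 1/(t(1-t))$. Then $J'(1/2) = J''(1/2) = 0$ follow by direct substitution, and the strict negativity of $J''(t)$ away from $1/2$ is immediate from the fact that the parabola $t(1-t)$ attains its maximum $1/4$ uniquely at $t = 1/2$, so $1/(t(1-t)) > 4$ for every other $t \in (0,1)$. This settles the nontrivial parts of (i) except the odd-order vanishing, which I handle together with (ii) below.

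Next, using the standard identities $(d/dt)^j(1/t) = (-1)^j j!\,t^{-(j+1)}$ and $(d/dt)^j(1/(1-t)) = j!\,(1-t)^{-(j+1)}$, I would derive the closed form
$$J^{(k)}(t) \;=\; -(k-2)!\,\Bigl[\tfrac{(-1)^{k}}{t^{k-1}} + \tfrac{1}{(1-t)^{k-1}}\Bigr] \qquad \text{for } k \geq 3.$$
Evaluating at $t = 1/2$ gives $J^{(k)}(1/2) = -(k-2)!\cdot 2^{k-1}\bigl[1+(-1)^{k}\bigr]$, which vanishes for odd $k$ (yielding the remaining claims $J'''(1/2) = J^{(5)}(1/2) = J^{(7)}(1/2) = 0$) and produces $-32$ for $k=4$ and $-1536$ for $k=6$, matching the stated values.

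Finally, specializing $k=8$ gives $J^{(8)}(t) = -720\,\bigl[t^{-7} + (1-t)^{-7}\bigr]$, which is manifestly negative. On $[1/4, 3/4]$ both $1/t$ and $1/(1-t)$ are bounded by $4$, so $|J^{(8)}(t)| \leq 2\cdot 720\cdot 4^{7} = 720\cdot 2^{15} < 2^{25}$, completing (ii). There is no conceptual obstacle here: the lemma reduces to a mechanical computation, and the only organizational trick is the partial-fraction decomposition, which spares us from iterated quotient rules and keeps all derivatives in a form where both the evaluation at $1/2$ and the $\ell^\infty$ bound on $[1/4,3/4]$ are transparent.
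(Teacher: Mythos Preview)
Your proof is correct and follows essentially the same route as the paper: both compute $J'$ and $J''$ directly, write the higher derivatives as linear combinations of $t^{-(k-1)}$ and $(1-t)^{-(k-1)}$ (the paper lists them one by one, you package them in a single closed form), and read off the values at $1/2$ and the bound on $[1/4,3/4]$. Your explicit check that $720\cdot 2^{15} < 2^{25}$ is a slight improvement over the paper's ``Using these equations, we can deduce the desired results,'' but there is no substantive difference in method.
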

\begin{proof}
We have 
$$J'(t)= \log \left( \frac{1-t}{t} \right)+ (4t-2).$$
Hence 
\begin{eqnarray*}
J''(t)= - \big[ t^{-1}+(1-t)^{-1} \big] +4, \hspace{0.4 cm} J'''(t)= \big[ t^{-2}-(1-t)^{-2} \big], \hspace{0.4 cm} J^{(4)}(t)= - 2\big[ t^{-3}+(1-t)^{-3} \big], \\
J^{(5)}(t)= 6\big[ t^{-4}-(1-t)^{-4} \big], \hspace{0.4 cm} J^{(6)}(t)= -24\big[ t^{-5}+(1-t)^{-5} \big], \hspace{0.4 cm} J^{(7)}(t)= 120\big[ t^{-6}-(1-t)^{-6} \big], 
\end{eqnarray*}
and 
$$J^{(8)}(t)= -720\big[ t^{-7}+(1-t)^{-7} \big].$$
Using these equations, we can deduce the  desired results. 
\end{proof}

\section{Proof of Theorem \ref{tcp}}  
 \subsection{An expression of the distribution function of $W_n$}
 Let us denote by $F_n(x)$  the distribution function of $W_n$, i.e.  for $x \in \R$
 \begin{eqnarray} \label{fnx}
 F_{n}(x)=\mu_n ( \sigma: W_n \leq x) = \mu_n(\sigma: \sigma_1 + \ldots + \sigma_n \leq  n^{3/4} x). 
 \end{eqnarray}
For $\sigma \in \Omega_n$, we define
  $$\sigma_+=\{i: \sigma_i=1\}.$$
Observe that if $|\sigma_{+}|=k$, then 
\begin{eqnarray*} 
\frac{1}{n}\sum \limits_{i\leq j} \sigma_i \sigma_j = 1 +\frac{1}{n}\sum \limits_{i< j} \sigma_i \sigma_j &=& 1 + \frac{1}{2n} \left( \left(\sum \limits_{1\leq i \leq n} \sigma_i \right)^2 -n \right) \notag \\
&=& \frac{(2k-n)^2}{2n}+ \frac{1}{2}.
\end{eqnarray*}
Hence, 
\begin{eqnarray*} 
Z_n= \sum \limits_{\sigma \in \Omega_n} \exp\left(\frac{1}{n}\sum \limits_{i\leq j} \sigma_i \sigma_j \right) =\sum_{k=0}^n \sum_{\substack{\sigma \in \Omega_n \\ |\sigma_+| =k}} \exp\left(\frac{1}{n}\sum \limits_{i\leq j} \sigma_i \sigma_j \right) =  \sum_{k=0}^n \binom{n}{k} e^{\frac{(2k-n)^2}{2n}+ \frac{1}{2}}. 
\end{eqnarray*}
Let us define
$$x_{k,n}=\binom{n}{k} e^{\frac{(2k-n)^2}{2n}+ \frac{1}{2}}.$$
Then 
\begin{eqnarray} \label{zn}
Z_n =  \sum_{k=0}^n x_{k,n},
\end{eqnarray}
and
\begin{equation} \label{spk}
\mu_n(\sigma: |\sigma_+| = k) = \frac{x_{k,n}}{Z_n}.
\end{equation}
Combining \eqref{fnx}, \eqref{zn} and \eqref{spk} yields that 
\begin{eqnarray} \label{mfn}
1- F_n(x) &=&  \mu_n(\sigma: \sigma_1 + \ldots + \sigma_n >  n^{3/4} x) \notag \\
&=& \mu_n(\sigma: 2|\sigma_+| -n > n^{3/4} x) = \mu_n \left(\sigma: |\sigma_+| > \frac{n + n^{3/4} x}{2} \right) \notag \\
&=& \frac{1}{Z_n} \sum \limits_{k =0}^n x_{k,n} \mathbb{I} \left(k > \frac{n + n^{3/4} x}{2}\right), 
\end{eqnarray}
where $\mathbb{I}(\cdot)$ stands for the indicator function. 
Using \eqref{ubnh} and \eqref{bnh}, we obtain 
\begin{eqnarray}
x_{k,n}&  \leq &   e^{n J(k/n) + 1/2}, \hspace{0.6 cm} \textrm{for all } k = 0, \ldots, n,  \label{uxkn}
\end{eqnarray}
and
\begin{eqnarray}
x_{k,n}& = & (1-\kO(n^{-1})) \sqrt{\frac{n}{2 \pi k(n-k)}} \times e^{n J(k/n) + 1/2},  \hspace{0.6 cm} \textrm{for } |k - (n/2)| < n/4, \label{xkn}
\end{eqnarray}
with  $J(t)$ the function defined  in \eqref{doj}. 

By Lemma \ref{ljx}, we observe  that $J(t)$ attains the maximum at the unique point $\tfrac{1}{2}$. This fact suggests us  that the value of $Z_n$ (the sum of $(x_{k,n})$) is concentrated at the middle terms. Let us define 
$$y_n= \sqrt{\frac{2}{ \pi n}} \times e^{n J(1/2) + 1/2},$$
which is asymptotic to $x_{[n/2],n}$. We define also
$$y_{k,n}=\frac{x_{k,n}}{y_n}.$$
Then the equation \eqref{mfn} becomes
\begin{eqnarray} \label{mtfn}
1 - F_n(x)=\frac{1}{\sum \limits_{k =0}^n  y_{k,n}} \times \sum \limits_{k =0}^n  y_{k,n}  \mathbb{I}\left(k > \frac{n + n^{3/4} x}{2} \right).
\end{eqnarray} 
Moreover, using \eqref{uxkn} and \eqref{xkn}, we  obtain estimates on $(y_{k,n})$, 
\begin{eqnarray}
y_{k,n}& \leq & \sqrt{\frac{\pi n}{2}} \times e^{n \big[J(k/n) -J( 1/2) \big]}, \hspace{0.6 cm} \textrm{for all } k = 0, \ldots, n,   \label{uxly}
\end{eqnarray} 
and 
\begin{eqnarray}
y_{k,n}& = & (1-\kO(n^{-1}))\sqrt{\frac{n^2}{4  k(n-k)}} \times e^{n \big[J(k/n) -J( 1/2) \big]}  \hspace{0.6 cm} \textrm{for } |k - \tfrac{n}{2}| < \tfrac{n}{4}   \label{xly}.
\end{eqnarray}
We define 
\begin{eqnarray*}
A_n &=& \sum \limits_{k=0}^n y_{k,n} \mathbb{I}\left( \big | k - \frac{n}{2} \big| \geq \frac{n}{4}  \right), \hspace{0.2 cm} B_n = \sum \limits_{k=0}^n y_{k,n} \mathbb{I}\left( \big | k - \frac{n}{2} \big| <\frac{n}{4}  \right) \\
\hat{A}_n &=& \sum \limits_{k=0}^n y_{k,n} \mathbb{I}\left(  k - \frac{n}{2}  \geq \frac{n}{4}  \right), \hspace{0.2 cm} B_{n,x} = \sum \limits_{k=0}^n y_{k,n} \mathbb{I} \left( \frac{n}{4}  > k - \frac{n}{2}  > \frac{n^{3/4}x}{2}  \right).
\end{eqnarray*}
Then by \eqref{mtfn},
\begin{equation} \label{fab}
1-F_{n}(x)=\frac{\hat{A}_n+B_{n,x}}{A_n+B_n}.
\end{equation}

 \subsection{Estimates of $A_n$ and $\hat{A}_n$}
\begin{lem} \label{eaa}
There exists a positive constant $c$, such that for $n$ large enough,
$$\hat{A}_n \leq A_n \leq e^{-cn}.$$
\end{lem}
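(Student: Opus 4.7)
The first inequality $\hat{A}_n \leq A_n$ is immediate: since each $y_{k,n}$ is nonnegative, $\hat{A}_n$ is a sub-sum of $A_n$ (it collects only the terms with $k - n/2 \geq n/4$, discarding those with $k - n/2 \leq -n/4$). So the whole content of the lemma lies in the exponential upper bound for $A_n$.

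The plan is to show that $J(k/n)$ is bounded away from $J(1/2)$ on the range contributing to $A_n$, and then apply the uniform estimate \eqref{uxly}. Concretely, I would first observe that by Lemma \ref{ljx}(i), $J'(1/2) = 0$ and $J''(t) < 0$ for $t \neq 1/2$, so $J$ is strictly concave on $[0,1]$ with unique maximum at $t = 1/2$. In particular, on the compact set $\{t \in [0,1] : |t - 1/2| \geq 1/4\}$, continuity together with strict concavity give
\begin{equation*}
c'' := J(1/2) - \max_{t \in [0,1], |t - 1/2| \geq 1/4} J(t) = J(1/2) - J(1/4) > 0,
\end{equation*}
where the maximum is attained at the endpoints $t = 1/4$ and $t = 3/4$ by the monotonicity that concavity forces on $[0,1/4]$ and $[3/4,1]$.

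Next, for every index $k$ with $|k - n/2| \geq n/4$, I apply the bound \eqref{uxly} to get
\begin{equation*}
y_{k,n} \leq \sqrt{\tfrac{\pi n}{2}} \, e^{n [J(k/n) - J(1/2)]} \leq \sqrt{\tfrac{\pi n}{2}} \, e^{-c'' n}.
\end{equation*}
Summing this over the at most $n+1$ valid indices yields
\begin{equation*}
A_n \leq (n+1) \sqrt{\tfrac{\pi n}{2}} \, e^{-c'' n},
\end{equation*}
and the polynomial prefactor is absorbed by choosing any $0 < c < c''$ and taking $n$ large enough, giving $A_n \leq e^{-c n}$.

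There is essentially no serious obstacle here; the proof is a soft compactness argument combined with the uniform large-deviation bound for $y_{k,n}$. The only small point worth noting is that one could just as well work with $J(1/4) < J(1/2)$ by a direct numerical comparison (since $J(1/4)$ and $J(1/2) = \log 2$ are explicit), but invoking strict concavity from Lemma \ref{ljx} makes the argument cleaner and self-contained.
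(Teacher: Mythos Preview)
Your proof is correct and follows essentially the same approach as the paper: use Lemma~\ref{ljx} to get $J(k/n)-J(1/2)\le J(1/4)-J(1/2)<0$ on the relevant range, plug into \eqref{uxly}, sum over at most $n+1$ terms, and absorb the polynomial prefactor. The only cosmetic difference is that the paper carries explicit numerical constants (it notes $J(1/4)-J(1/2)<-0.005$ and concludes $A_n<e^{-0.004n}$), whereas you keep $c''$ abstract; your final remark about the direct numerical comparison is exactly what the paper does.
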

\begin{proof}
By Lemma \ref{ljx}, we have $J'(\tfrac{1}{2})=0$ and $J''(t) \leq 0$ for all $t\in (0,1)$. Therefore, 
\[\max_{|x-0.5|\geq 0.25} J(x) = \max \{J(0.75), J(0.25)\} =J(0.25).\]
 Hence  for all $|k-(n/2)|\geq n/4$,
 \begin{equation*} \label{sach}
 J(k/n)-J(1/2) \leq J(0.25) - J(0.5) < -0.005.
\end{equation*}  
Thus for all $|k-(n/2)|\geq n/4$,
\begin{equation} \label{nlj}
n \big(J(k/n)-J(1/2)\big) <- 0.005 n. 
\end{equation}
It follows from  \eqref{uxly} and \eqref{nlj} that  for   $|k-(n/2)|\geq n/4$, 
\begin{equation*}
y_{k,n}\leq  \sqrt{2n} \exp \left(-0.005 n  \right).
\end{equation*}
 Thus 
 $$\hat{A}_n \leq A_n \leq  n \sqrt{2n} \exp \left(-0.005 n  \right) < \exp \left(-0.004 n  \right),$$
 for all $n$ large enough.
\end{proof}

 \subsection{Estimates of $B_n$} 
 
 By using Lemma \ref{ljx} (i) and Taylor expansion, we get   
 \begin{eqnarray*} \label{jkn}
J \left( \frac{k}{n} \right) - J \left( \frac{1}{2} \right) = \frac{1}{4!} J^{(4)}(1/2) \left( \frac{k}{n}-\frac{1}{2} \right)^4+\frac{1}{6!} J^{(6)}(1/2) \left( \frac{k}{n}-\frac{1}{2} \right)^6  + \frac{1}{8!} J^{(8)}(\xi_{k,n}) \left( \frac{k}{n}-\frac{1}{2} \right)^8,
 \end{eqnarray*}
 with some $\xi_{k,n}$ between $k/n$ and $1/2$. Hence, by Lemma \ref{ljx} (ii), 
 \begin{eqnarray*} \label{jkn}
J \left( \frac{k}{n} \right) - J \left( \frac{1}{2} \right) &\leq &   -\frac{(2k-n)^4}{12 n^4}-  \frac{(2k-n)^6}{30n^6}.
 \end{eqnarray*}
 and 
 \begin{eqnarray*} \label{jkn}
J \left( \frac{k}{n} \right) - J \left( \frac{1}{2} \right) &\geq &  -\frac{(2k-n)^4}{12 n^4}-  \frac{(2k-n)^6}{30n^6}  - \frac{2^{17}(2k-n)^8}{ n^8 8!}. 
 \end{eqnarray*}
 Therefore,
 \begin{eqnarray*} \label{jkn}
n(J(k/n)-J(1/2)) &\leq &   -\frac{(2k-n)^4}{12 n^3}-  \frac{(2k-n)^6}{30n^5}
 \end{eqnarray*}
 and 
 \begin{eqnarray*} \label{jkn}
n(J(k/n)-J(1/2)) &\geq &  -\frac{(2k-n)^4}{12 n^3}-  \frac{(2k-n)^6}{30n^5}  - \frac{2^{17}(2k-n)^8}{n^7 8!}. 
 \end{eqnarray*}
Combining the last two estimates with  the inequality that $1-x \leq e^{-x} \leq 1-x + \tfrac{x^2}{2}$ for all $x\geq 0$, we get
\begin{eqnarray*}
e^{n \big[ J(k/n)-J(1/2)\big]} &\leq& \exp \left(\frac{-(2k-n)^4}{12 n^3} \right) \left(1 -  \frac{(2k-n)^6}{30 n^5} +  \frac{(2k-n)^{12}}{1800 n^{10}} \right),
\end{eqnarray*}
and 
\begin{eqnarray*}
e^{n \big[ J(k/n)-J(1/2)\big]} &\geq& \exp \left(\frac{-(2k-n)^4}{12 n^3} \right) \left(1 -  \frac{(2k-n)^6}{30 n^5} \right)  \left(1 -  \frac{2^{17}(2k-n)^8}{ n^7 8!} \right). 
\end{eqnarray*}
Therefore,
\begin{eqnarray} \label{jtj}
e^{n \big[ J(k/n)-J(1/2)\big]} =  \exp \left(\frac{-(2k-n)^4}{12 n^3} \right) \left(1 -  \frac{(2k-n)^6}{30 n^5} + \kO(1) X_{k,n}\right),
\end{eqnarray}
where 
$$X_{k,n}= \frac{(2k-n)^8}{ n^7} + \frac{(2k-n)^{12}}{n^{10}}+ \frac{(2k-n)^{14}}{n^{12}}.$$
On the other hand, for $|k-(n/2)|<n/4$,
\begin{eqnarray} \label{ckn}
\sqrt{\frac{n^2}{4  k(n-k)}} = 1 +\frac{(2k-n)^2}{2 n^2} + \kO \left(\frac{(2k-n)^{4}}{n^{4}} \right).
\end{eqnarray}
Combining  \eqref{xly}, \eqref{jtj} and \eqref{ckn}, we have  for $|k-(n/2)|<n/4$,
\begin{equation*}
y_{k,n}= \left(1+ \frac{(2k-n)^2}{2n^2} -\frac{(2k-n)^6}{30n^5} + \kO(1) R_{k,n} \right) \exp \left(-\frac{(2k-n)^4}{12n^3} \right),
\end{equation*}
where 
$$R_{k,n}= \frac{1}{n}+\frac{(2k-n)^{4}}{n^{4}}+ \frac{(2k-n)^8}{ n^7} + \frac{(2k-n)^{12}}{n^{10}}+ \frac{(2k-n)^{14}}{n^{12}}.$$
By letting $\ell = 2k -n$, we obtain
\begin{eqnarray}
B_n &=& \sum \limits_{n/4<k<3n/4} y_{k,n} \notag \\
& =& \sum_{\substack{|\ell| < n/2  \\ 2 \mid (\ell + n) }}  e^{-\frac{\ell^4}{12 n^3}}\left(1+ \frac{\ell^2}{2n^2} -\frac{\ell^6}{30n^5} + \kO \left( \frac{1}{n}+\frac{\ell^{4}}{n^{4}}+ \frac{\ell^8}{ n^7} + \frac{\ell^{12}}{n^{10}}+ \frac{\ell^{14}}{n^{12}} \right) \right) \notag \\
& =& \sum_{\substack{|\ell| < n/2  \\ 2 \mid (\ell + n) }}  e^{-\left(\frac{\ell}{n^{3/4}}\right)^4/12}\left[ 1+ \frac{1}{\sqrt{n}} \left( \frac{1}{2}\left(\frac{\ell}{n^{3/4}}\right)^2 - \frac{1}{30}\left(\frac{\ell}{n^{3/4}}\right)^6  \right) \right.  \notag \\ 
&& \hspace{1cm} + \left. \frac{\kO(1)}{n} \left( 1+ \left(\frac{\ell}{n^{3/4}}\right)^4 + \left(\frac{\ell}{n^{3/4}}\right)^8 + \left(\frac{\ell}{n^{3/4}}\right)^{12} + \frac{1}{\sqrt{n}} \left(\frac{\ell}{n^{3/4}}\right)^{14} \right) \right] \notag \\
&=&  \sum_{\substack{|\ell| < n/2  \\ 2 \mid (\ell + n) }} p_1 \left( \frac{\ell}{n^{3/4}} \right) + \frac{1}{\sqrt{n}} \sum_{\substack{|\ell| < n/2  \\ 2 \mid (\ell + n) }} p_2 \left( \frac{\ell}{n^{3/4}} \right) + \frac{\kO(1)}{n}  \sum_{\substack{|\ell| < n/2  \\ 2 \mid (\ell + n) }} r \left( \frac{\ell}{n^{3/4}} \right), \label{eobn}
\end{eqnarray}
where 
\begin{eqnarray*}
p_1(t) &=& e^{-\frac{t^4}{12}} \\
p_2(t) & = & \left(\frac{t^2}{2} - \frac{t^6}{30} \right) e^{-\frac{t^4}{12}}\\
r(t) &=& (1 + t^{4} + t^8 + t^{12} + t^{14}/\sqrt{n} )e^{-\frac{t^4}{12}}.
\end{eqnarray*}
The proof of the following lemma is simple, so we omit it. 
\begin{lem} \label{lpb}
There exists a positive constant $K$, such that 
\begin{equation*} 
\sup \limits_{t \in \R} |p_1(t)|+|p_2(t)| +|r(t)| + |p_1'(t)|+|p_2'(t)| +|r'(t)| \leq K.
\end{equation*}
\end{lem}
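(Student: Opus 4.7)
The plan is to observe that every function in the statement is a polynomial times $e^{-t^4/12}$, and that the quartic exponential decay dominates any polynomial growth, so each such product is bounded on $\R$ by a universal constant. Since the statement requires $K$ to be independent of $n$, I would first handle the $n$-dependence in $r(t)$: for $n \geq 1$ we have $t^{14}/\sqrt{n} \leq t^{14}$, so it suffices to prove the bound for $\tilde r(t) = (1 + t^4 + t^8 + t^{12} + t^{14}) e^{-t^4/12}$, whose coefficients no longer involve $n$.

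The key elementary estimate I would invoke is that for every integer $k \geq 0$ there is a constant $C_k$ with $|t|^k e^{-t^4/12} \leq C_k$ for all $t \in \R$. This follows by setting $u = t^4/12$ and noting $|t|^k e^{-t^4/12} = (12 u)^{k/4} e^{-u}$, which is bounded on $[0,\infty)$ because the exponential dominates. Writing $p_1$, $p_2$ and $\tilde r$ as linear combinations of monomials $t^j e^{-t^4/12}$ with $j \leq 14$ immediately yields the bound on $|p_1| + |p_2| + |r|$.

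For the derivatives, I would compute directly:
\begin{align*}
p_1'(t) &= -\tfrac{t^3}{3}\, e^{-t^4/12},\\
p_2'(t) &= \Bigl(t - \tfrac{t^5}{5}\Bigr) e^{-t^4/12} + \Bigl(\tfrac{t^2}{2} - \tfrac{t^6}{30}\Bigr) \Bigl(-\tfrac{t^3}{3}\Bigr) e^{-t^4/12},
\end{align*}
and analogously $\tilde r'(t)$ is a polynomial of degree at most $17$ times $e^{-t^4/12}$. Each of these is again a finite linear combination of terms $t^j e^{-t^4/12}$ with uniformly bounded coefficients, so the same monomial estimate applies.

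The argument has no real obstacle; it is purely a verification, which is why the authors omit it. The only point requiring a moment of care is the uniformity of $K$ in $n$, which is handled by the crude bound $1/\sqrt{n} \leq 1$ noted above. I would then take $K$ to be the sum of the (finitely many) monomial bounds $C_j$ weighted by the corresponding coefficients of $p_1, p_2, \tilde r$ and their derivatives.
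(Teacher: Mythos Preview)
Your proposal is correct. The paper omits the proof entirely as ``simple,'' and your argument---that each function is a polynomial times $e^{-t^4/12}$, hence bounded, with the $n$-dependence in $r(t)$ absorbed via $t^{14}/\sqrt{n}\le t^{14}$ for $n\ge 1$---is precisely the routine verification the authors had in mind.
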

 Using Lemma \ref{loia} (ii) and Lemma \ref{lpb}, we obtain that 
\begin{equation*}
\sum_{\substack{|\ell| \leq n^{5/6}    \\ 2 \mid (\ell + n) }} p_1 \left( \frac{\ell}{n^{3/4}} \right) = \frac{n^{3/4}}{2} \int \limits_{-n^{1/12}}^{n^{1/12}} p_1(t) dt +O(n^{1/12}).  
\end{equation*}
Moreover, 
\begin{eqnarray*}
\sum_{\substack{  n^{5/6} < |\ell| < n/2   \\ 2 \mid (\ell + n) }} p_1 \left( \frac{\ell}{n^{3/4}} \right) &\leq& n e^{-n^{1/3}}, \\
\int \limits_{|t| \geq n^{1/12}}  p_1(t) dt &=&o(n^{-1}).
\end{eqnarray*}
Combining the last three estimates gives that 
\begin{equation} \label{eop1}
\sum_{\substack{|\ell| < n/2    \\ 2 \mid (\ell + n) }} p_1 \left( \frac{\ell}{n^{3/4}} \right) = \frac{n^{3/4}}{2} \int \limits_{-\infty}^{ \infty } p_1(t) dt +O(n^{1/12}).  
\end{equation}
Similarly, 
\begin{eqnarray}
\sum_{\substack{|\ell| < n/2    \\ 2 \mid (\ell + n) }} p_2 \left( \frac{\ell}{n^{3/4}} \right) = \frac{n^{3/4}}{2} \int \limits_{-\infty}^{ \infty } p_2(t) dt +O(n^{1/12}), \label{eop2}\\
\sum_{\substack{|\ell| < n/2    \\ 2 \mid (\ell + n) }} r_1 \left( \frac{\ell}{n^{3/4}} \right) = \frac{n^{3/4}}{2} \int \limits_{-\infty}^{ \infty } r_1(t) dt +O(n^{1/12}).  \label{eor} 
\end{eqnarray}
We now can deduce  from \eqref{eobn}, \eqref{eop1}, \eqref{eop2} and \eqref{eor} an estimate of $B_n$ that 
\begin{eqnarray} \label{ebn}
&& B_{n} = \frac{n^{3/4}}{2} \int \limits_{-\infty}^{ \infty } p_1(t)   + \frac{n^{1/4}}{2}  \int \limits_{-\infty}^{ \infty } p_2(t) + O(n^{1/12}).
\end{eqnarray}
\subsection{Estimates of $B_{n,x}$}
Using the same arguments for \eqref{eobn}, we also have 
\begin{eqnarray} \label{eobnx}
B_{n,x} &=& \sum \limits y_{k,n} \mathbb{I} \left( \frac{n}{4}  > k - \frac{n}{2}  > \frac{n^{3/4}x}{2}  \right) \notag \\
&=&  \sum_{\substack{n^{3/4} x <\ell < n/2  \\ 2 \mid (\ell + n) }} p_1 \left( \frac{\ell}{n^{3/4}} \right) + \frac{1}{\sqrt{n}} \sum_{\substack{n^{3/4} x <\ell < n/2   \\ 2 \mid (\ell + n) }} p_2 \left( \frac{\ell}{n^{3/4}} \right) + \frac{\kO(1)}{n}  \sum_{\substack{n^{3/4} x <\ell < n/2   \\ 2 \mid (\ell + n) }} r \left( \frac{\ell}{n^{3/4}} \right).
\end{eqnarray}
In the sequel, we consider two cases: $x>10$ and $x\leq 10$. For the case $x>10$, we will use Part (i) of Lemma \ref{loia} to obtain a sharp estimate on $B_{n,x}$, while for the case $x\leq 10$, as for $B_n$, we apply Part (ii) to get a suitable estimate.  The choice of the number 10 is flexible. We just need the fact that the functions $p_1(\cdot),p_2(\cdot)$ and $r(\cdot)$  are decreasing in the interval $(c,\infty)$ for a positive constant $c$ (see Lemma \ref{lhd}).
\subsubsection{Case $x >10$} 
  \begin{lem}\label{lhd} These functions $p_1(t), p_2(t)$ and $r(t)$ are decreasing in $(9, \infty)$.
  \end{lem}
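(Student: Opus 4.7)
The plan is to reduce each monotonicity claim to a polynomial sign check. Every one of $p_1,p_2,r$ has the form $P(t)e^{-t^4/12}$ where $P$ is a polynomial in $t$ whose coefficients may depend on $1/\sqrt{n}$. Using $\frac{d}{dt}e^{-t^4/12}=-(t^3/3)e^{-t^4/12}$ and the positivity of the exponential, the sign of the derivative equals the sign of the bracket $P'(t)-(t^3/3)P(t)$, so it suffices to compute three explicit polynomial brackets and check their signs on $(9,\infty)$.

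For $p_1$ the bracket is $-t^3/3<0$ trivially. For $p_2$, with $P(t)=t^2/2-t^6/30$, expanding and then solving the quadratic in $u=t^4$ (whose discriminant $1089-360=729$ is a perfect square) would give
\[
P'(t)-\tfrac{t^3}{3}P(t)\;=\;\frac{t(t^4-3)(t^4-30)}{90},
\]
which has constant sign on $(9,\infty)$ since $t^4\ge 9^4=6561\gg 30$, making $p_2$ strictly monotone there. Strictly speaking this makes $|p_2|$ (rather than $p_2$ itself) decreasing, because $p_2<0$ on $(15^{1/4},\infty)$, but this is exactly the monotonicity that the application of Lemma \ref{loia}(i) in the sequel actually needs, applied to $-p_2$.

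For $r$, with $P(t)=1+t^4+t^8+t^{12}+t^{14}/\sqrt n$, direct expansion gives
\[
P'(t)-\tfrac{t^3}{3}P(t)\;=\;\tfrac{11}{3}t^3+\tfrac{23}{3}t^7+\tfrac{35}{3}t^{11}-\tfrac{t^{15}}{3}+\frac{1}{\sqrt n}\Bigl(14t^{13}-\tfrac{t^{17}}{3}\Bigr).
\]
On $(9,\infty)$, the factor $t^4\ge 6561$ overwhelms each of the numerical constants $35/3$ and $14$, so $-t^{15}/3$ absorbs all the positive $n$-independent terms and $-t^{17}/(3\sqrt n)$ absorbs $14t^{13}/\sqrt n$; the whole bracket is therefore strictly negative uniformly in $n$, yielding $r'<0$ on $(9,\infty)$. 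The main obstacle is not any real subtlety but simply bookkeeping across several terms; the threshold $9$ is chosen precisely so that $t^4$ can dominate the numerical constants in a single clean comparison.
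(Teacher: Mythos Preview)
The paper omits the proof entirely, declaring it ``elementary,'' so there is nothing to compare against; your write-up supplies the missing argument and does so correctly. The derivative computations for $p_1$ and $r$ are accurate, and your factorisation
\[
p_2'(t)\,e^{t^4/12}=\frac{t(t^4-3)(t^4-30)}{90}
\]
is correct and elegant.

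Your observation about $p_2$ is in fact a genuine catch: on $(9,\infty)$ the bracket is \emph{positive}, so $p_2$ is increasing there, not decreasing as the lemma asserts. The statement of the lemma is therefore literally false for $p_2$. You are also right that this is harmless for the application: Lemma~\ref{loia}(i) applies equally to $-p_2$, yielding the same error bound $|p_2(x)|+|p_2(n^{1/4}/2)|$, which is exactly what is used in the displayed estimate for $B_{n,x}$ in Section~3.4.1. So your proof is complete and, as a bonus, patches a small inaccuracy in the paper's formulation.
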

The proof of this lemma is elementary, so we omit it.    Applying  Lemma \ref{loia} (i) and Lemma \ref{lhd} to the sums in \eqref{eobnx}, we obtain 
  \begin{eqnarray*}
 && B_{n,x} = \frac{n^{3/4}}{2} \int\limits_x^{\tfrac{n^{1/4}}{2}} p_1(t) dt  + \frac{n^{1/4}}{2} \int\limits_x^{\tfrac{n^{1/4}}{2}} p_2(t) dt \\
 && + \kO(1) \left( \frac{1}{n^{1/4}} \int\limits_x^{\tfrac{n^{1/4}}{2}} r(t) dt + p_1(x) + \frac{p_2(x)}{\sqrt{n}} + \frac{r(x)}{n} + p_1\left(\tfrac{n^{1/4}}{2} \right) + \frac{ p_2\left(\tfrac{n^{1/4}}{2}\right)}{\sqrt{n}} + \frac{r\left(\tfrac{n^{1/4}}{2}\right)}{\sqrt{n}} \right).
  \end{eqnarray*}
Moreover,  
\begin{eqnarray*}
p_1\left(\tfrac{n^{1/4}}{2}\right),  p_2\left(\tfrac{n^{1/4}}{2}\right),  r\left(\tfrac{n^{1/4}}{2}\right), \int\limits_{\tfrac{n^{1/4}}{2}}^{\infty} p_1(t) dt, \int\limits_{\tfrac{n^{1/4}}{2}}^{\infty} p_2(t) dt, \int\limits_{\tfrac{n^{1/4}}{2}}^{\infty}  r(t) dt   = o(n^{-1}).
\end{eqnarray*}
Therefore,
\begin{eqnarray} \label{ebnx}
&& B_{n,x} = \frac{n^{3/4}}{2} \hat{P}_1(x)  + \frac{n^{1/4}}{2} \hat{P}_2(x)   + \kO(1) \left( \frac{\hat{R}(x)}{n^{1/4}}  + p_1(x) + \frac{p_2(x)}{\sqrt{n}} + \frac{r(x)}{n}  \right) + o(1),\end{eqnarray}
where 
\begin{eqnarray}
\hat{P}_1(x) &=&\int_x^{\infty} p_1(t) dt, \notag \\
\hat{P}_2(x) &=&\int_x^{\infty} p_2(t) dt, \label{p12} \\
\hat{R}(x) &=&\int_x^{\infty} r(t) dt. \notag
\end{eqnarray}

\subsubsection{Case $x \leq 10$} Using the same arguments for \eqref{ebn}, we can show that 
\begin{eqnarray} \label{fbnx}
 B_{n,x} = \frac{n^{3/4}}{2} \hat{P}_1(x)  + \frac{n^{1/4}}{2} \hat{P}_2(x)   + \kO(n^{1/12}).
\end{eqnarray}

\subsection{Conclusion}
We first rewrite \eqref{ebn} as 
\begin{eqnarray} \label{ebnn}
&& B_{n} = \frac{n^{3/4}}{2} \hat{P}_1(-\infty)    + \frac{n^{1/4}}{2} \hat{P}_2(-\infty)  + O(n^{1/12}), 
\end{eqnarray}
with $\hat{P}_1(x)$ and $\hat{P}_2(x)$ as in \eqref{p12}.
\subsubsection{Case $x>10$}

Combining  \eqref{fab},  \eqref{ebnx}, \eqref{ebnn},  we have 
\begin{eqnarray*}
1-F_n(x) &=& \frac{n^{3/4} \hat{P}_1(x)  + n^{1/4}\hat{P}_2(x)   + \kO(1) \left( \hat{R}(x)n^{-1/4}  + p_1(x) + p_2(x) n^{-1/2} + r(x)n^{-1}  \right) }{n^{3/4} \hat{P}_1(-\infty)  + n^{1/4}\hat{P}_2(-\infty)   + \kO(n^{1/12})} \\
&=& \frac{ \hat{P}_1(x)  + n^{-1/2}\hat{P}_2(x)   + \kO(1) \left( \hat{R}(x)n^{-1}  + p_1(x) n^{-3/4} + p_2(x) n^{-5/4} + r(x)n^{-7/4}  \right) }{ \hat{P}_1(-\infty)  + n^{-1/2}\hat{P}_2(-\infty)   + \kO(n^{-2/3})}. 
\end{eqnarray*}
Notice that $1- F(x) = \hat{P}_1(x)/ \hat{P}_1(-\infty)$.  Therefore,
\begin{eqnarray*}
&& -1+ \frac{1-F_n(x)}{1-F(x)} = -1 + (1-F_n(x))\frac{\hat{P}_1(-\infty)}{\hat{P}_1(x)} \\
&& =-1 + \frac{ \hat{P}_1(-\infty)  +\frac{\hat{P}_1(-\infty)}{\sqrt{n}} \frac{\hat{P}_2(x)}{\hat{P}_1(x)}   + \kO(1) \left( n^{-1} \frac{\hat{R}(x)}{\hat{P}_1(x)}     + n^{-3/4} \frac{p_1(x)}{\hat{P}_1(x)}  + n^{-5/4} \frac{p_2(x)}{\hat{P}_1(x)}  + n^{-7/4} \frac{r(x)}{\hat{P}_1(x)}  \right) }{ \hat{P}_1(-\infty)  + n^{-1/2}\hat{P}_2(-\infty)   + \kO(n^{-2/3})}\\ 
&&= \frac{1}{\sqrt{n}} \left( \frac{\hat{P}_2(x)}{\hat{P}_1(x)} - \frac{\hat{P}_2(-\infty)}{\hat{P}_1(-\infty)} \right)+ \kO(1) \left( \frac{x^{12}}{n} + n^{-3/4} \right),
\end{eqnarray*}
where for the last line  we have used that
\begin{eqnarray*}
\frac{\hat{R}(x)}{\hat{P}_1(x)} =  \Theta( x^{12}), \hspace{0.6 cm}    \frac{p_1(x)}{\hat{P}_1(x)} = \Theta (1),  \hspace{0.6 cm} \frac{p_2(x)}{\hat{P}_1(x)} = \Theta (x^6), \hspace{0.6 cm}  \frac{r(x)}{\hat{P}_1(x)}  =\Theta ( x^{12}).
\end{eqnarray*}
In conclusion, 
\begin{equation*}
 \frac{1-F_n(x)}{1-F(x)} = 1+ \frac{G(x)}{\sqrt{n}} + \kO(1) \left( \frac{x^{12}}{n} + n^{-3/4} \right),
\end{equation*}
with $G(x)$ as in Theorem \ref{tcp}.
\subsubsection{Case $x \leq 10$}
Using \eqref{fbnx}, \eqref{ebnn} and the same arguments as in the case $x>10$, we can prove that 
\begin{eqnarray*}
&&  \frac{1-F_n(x)}{1-F(x)} = 1+ \frac{G(x)}{\sqrt{n}} + \kO(n^{-2/3}).
\end{eqnarray*}
Notice that here the term $ \kO(n^{-2/3})$ comes from the quotient $ \kO(n^{1/12})/ n^{3/4}$.

\begin{ack} \emph{
 We would like to thank the anonymous referee  for carefully reading the manuscript and many valuable comments.  }
 \end{ack}

\end{document}